\newtheorem{theorem}{Theorem}[section]
\newtheorem{corollary}{Corollary}[section]
\begin{document}

\title{Arithmetic Subderivatives and Leibniz-Additive Functions}
\author{Jorma K. Merikoski, Pentti Haukkanen\\
Faculty of Information Technology and
Communication Sciences,\\ FI-33014 Tampere University,
 Finland\\
{jorma.merikoski@uta.fi}, {pentti.haukkanen@tuni.fi}\\
Timo Tossavainen\\
Department of Arts, Communication and Education,\\ Lulea University of Technology, \\
 SE-97187 Lulea, Sweden\\
{timo.tossavainen@ltu.se}
}

\date{}
\maketitle

\begin{abstract}
We first introduce the arithmetic subderivative of a positive integer with respect to a non-empty
set of primes. This notion generalizes the concepts of the arithmetic derivative and arithmetic
partial derivative. More generally, we then define that an arithmetic function~$f$ is Leibniz-additive
if there is a nonzero-valued and completely multiplicative function~$h_f$ satisfying
$f(mn)=f(m)h_f(n)+f(n)h_f(m)$ for all positive integers $m$ and~$n$.
We study some basic properties of such functions. For example, we present conditions when an
arithmetic function is Leibniz-additive and, generalizing well-known bounds
for the arithmetic derivative, establish bounds for a Leibniz-additive function.
%\keywords arithmetic derivative, Leibniz rule, additivity, multiplicativity
%\AMSclassificationnumber 11A25, 11A05
\end{abstract}

\section{Introduction}

We let $\mathbb{P}$, $\mathbb{Z}_+$, $\mathbb{N}$, $\mathbb{Z}$, and
$\mathbb{Q}$ stand
for the set of primes,  positive integers, nonnegative integers, integers, and
rational numbers, respectively.

Let $n\in\mathbb{Z}_+$. There is a unique sequence
$(\nu_p(n))_{p\in\mathbb{P}}$ of nonnegative integers (with only finitely many
positive terms) such that
\begin{eqnarray}
\label{n}
n=\prod_{p\in\mathbb{P}}p^{\nu_p(n)}.
\end{eqnarray}
We use this notation throughout.

Let $\emptyset\ne S\subseteq\mathbb{P}$. We define the {\em arithmetic
subderivative} of~$n$ with respect to~$S$ as
$$
D_S(n)=n'_S=n\sum_{p\in S}\frac{\nu_p(n)}{p}.
$$
In particular, $n'_\mathbb{P}$ is the {\em arithmetic derivative} of~$n$, defined by
Barbeau~\cite{Ba} and studied further by Ufnarovski and \AA hlander~\cite{UA}.
Another well-known special case is $n'_{\{p\}}$, the {\em arithmetic partial
derivative} of~$n$ with respect to $p\in\mathbb{P}$, defined by Kovi\v{c}~\cite{Ko}
and studied further by the present authors and Mattila~\cite{HMMT, HMT1}.

We define the {\em arithmetic logarithmic subderivative} of~$n$ with respect to~$S$ as
$$
{\rm ld}_S(n)=\frac{D_S(n)}{n}=\sum_{p\in S}\frac{\nu_p(n)}{p}.
$$
In particular, ${\rm ld}_\mathbb{P}(n)$ is the {\em arithmetic logarithmic
derivative} of~$n$. This notion was originally introduced by Ufnarovski and \AA hlander \cite{UA}.

An arithmetic function~$g$ is
{\it completely additive} (or {\em c-additive}, for short) if $g(mn)=g(m)+g(n)$ for all
$m,n\in\mathbb{Z}_+$. 
It follows from the definition that $g(1)=0$. An arithmetic function~$h$ is
{\it completely multiplicative} (or {\em c-multiplicative}, for short) if $h(1)=1$ and $h(mn)=h(m)h(n)$ for all
$m,n\in\mathbb{Z}_+$. The following theorems recall
that these functions are totally determined by their values at primes.
The proofs are simple and omitted.

\begin{theorem}
\label{cadd}
Let $g$ be an arithmetic function, and let
$(x_p)_{p\in\mathbb{P}}$ be a sequence of real numbers. The following
conditions are equivalent:
\begin{enumerate}
\item[{\rm (a)}] $g$ is c-additive and $g(p)=x_p$ for all $p\in\mathbb{P}$;
\item[{\rm (b)}] for all $n\in\mathbb{Z}_+$,
$$
g(n)=\sum_{p\in\mathbb{P}}\nu_p(n)x_p.
$$
\end{enumerate}
\end{theorem}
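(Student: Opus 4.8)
The plan is to establish the two implications separately, relying in both directions on the elementary fact that the $p$-adic valuation is itself completely additive: $\nu_p(mn)=\nu_p(m)+\nu_p(n)$ for all $m,n\in\mathbb{Z}_+$ and all $p\in\mathbb{P}$. This follows immediately by comparing the factorizations of $m$, $n$, and $mn$ in the form~\eqref{n} and invoking uniqueness. I would also note at the outset that all the sums over $\mathbb{P}$ occurring below have only finitely many nonzero terms, so no convergence or rearrangement issues arise.

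For the implication (a)$\Rightarrow$(b), I would first observe that complete additivity forces $g(1)=0$, since $g(1)=g(1\cdot1)=g(1)+g(1)$; this settles the case $n=1$ (the empty product). For $n>1$, write $n$ as a product of $k=\sum_{p\in\mathbb{P}}\nu_p(n)$ primes, listed with multiplicity, say $n=q_1q_2\cdots q_k$. A straightforward induction on $k$, using $g(ab)=g(a)+g(b)$ at each step, gives $g(n)=\sum_{i=1}^k g(q_i)$. Collecting equal factors and using the hypothesis $g(q_i)=x_{q_i}$ then yields $g(n)=\sum_{p\in\mathbb{P}}\nu_p(n)x_p$, as desired.

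For the implication (b)$\Rightarrow$(a), evaluating the formula at a prime $p$ gives $g(p)=\sum_{q\in\mathbb{P}}\nu_q(p)x_q=x_p$, since $\nu_q(p)=1$ for $q=p$ and $0$ otherwise. To see that $g$ is completely additive, I would substitute the additivity of $\nu_p$ into the formula and split the sum:
$$
g(mn)=\sum_{p\in\mathbb{P}}\nu_p(mn)x_p=\sum_{p\in\mathbb{P}}\bigl(\nu_p(m)+\nu_p(n)\bigr)x_p=\sum_{p\in\mathbb{P}}\nu_p(m)x_p+\sum_{p\in\mathbb{P}}\nu_p(n)x_p=g(m)+g(n),
$$
the splitting being legitimate because each of the two resulting sums has only finitely many nonzero terms.

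The whole argument is routine; if there is any genuine obstacle at all, it is merely the bookkeeping in the induction of (a)$\Rightarrow$(b) — that is, making precise the passage from additivity over two factors to additivity over an arbitrary finite product of primes — together with keeping track of the (harmless) finiteness of all the sums involved.
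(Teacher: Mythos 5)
Your proof is correct; the paper itself omits the argument as ``simple,'' and what you give is precisely the routine induction on the number of prime factors (with multiplicity) for (a)$\Rightarrow$(b) and the splitting of the finite sum via the complete additivity of $\nu_p$ for (b)$\Rightarrow$(a) that the authors evidently have in mind. No gaps: the finiteness remarks dispose of the only point that could conceivably need care.
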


\begin{theorem}
\label{cmult}
Let  $h$ be an arithmetic and nonzero-valued function, and let
$(y_p)_{p\in\mathbb{P}}$ be a sequence of nonzero real numbers. The following
conditions are equivalent:
\begin{enumerate}
\item[{\rm (a)}] $h$ is c-multiplicative and $h(p)=y_p$ for all $p\in\mathbb{P}$;
\item[{\rm (b)}] for all $n\in\mathbb{Z}_+$,
$$
h(n)=\prod_{p\in\mathbb{P}}y_p^{\nu_p(n)}.
$$
\end{enumerate}
\end{theorem}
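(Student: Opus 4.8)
The plan is to prove both implications directly, in close parallel with the (omitted) proof of Theorem~\ref{cadd}, replacing sums by products and using the prime factorization~\eqref{n} together with the additivity of each exponent valuation $\nu_p$.

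For (a)~$\Rightarrow$~(b), I would argue by induction on $n$. The base case $n=1$ is immediate, since $h(1)=1=\prod_{p\in\mathbb{P}}y_p^{\nu_p(1)}$ (all exponents $\nu_p(1)$ being $0$). For $n>1$, pick any prime $q$ with $\nu_q(n)\ge 1$ and write $n=q\,m$ with $m=n/q<n$. Complete multiplicativity gives $h(n)=h(q)h(m)=y_q\,h(m)$, and the induction hypothesis applied to $m$ — whose exponents satisfy $\nu_p(m)=\nu_p(n)$ for $p\ne q$ and $\nu_q(m)=\nu_q(n)-1$ — yields $h(m)=y_q^{\nu_q(n)-1}\prod_{p\ne q}y_p^{\nu_p(n)}$. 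Multiplying by $y_q$ recombines this into $\prod_{p\in\mathbb{P}}y_p^{\nu_p(n)}$, which is the asserted formula. (The product is genuinely finite, as only finitely many $\nu_p(n)$ are nonzero, so this rearrangement is legitimate.) In particular this also records the key special case $h(p^k)=y_p^k$, the multiplicative analogue of the corresponding step in Theorem~\ref{cadd}.

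For (b)~$\Rightarrow$~(a), I would simply verify from the formula the three required properties. First, $h$ is nonzero-valued, because each $y_p\ne 0$ and $h(n)$ is a finite product of such factors. Second, $h(1)=\prod_{p}y_p^{0}=1$, and for a prime $q$ we have $\nu_p(q)=1$ when $p=q$ and $0$ otherwise, so $h(q)=y_q$. Third, since $\nu_p(mn)=\nu_p(m)+\nu_p(n)$ for every prime $p$ (immediate from~\eqref{n}),
$$
h(mn)=\prod_{p\in\mathbb{P}}y_p^{\nu_p(m)+\nu_p(n)}=\Big(\prod_{p\in\mathbb{P}}y_p^{\nu_p(m)}\Big)\Big(\prod_{p\in\mathbb{P}}y_p^{\nu_p(n)}\Big)=h(m)h(n),
$$
so $h$ is c-multiplicative with $h(p)=y_p$, completing the equivalence.

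I do not expect a genuine obstacle here; the argument is routine. The only points deserving a word of care are the observation that all the displayed products are in fact finite, so that splitting and reindexing them is valid, and the exponent bookkeeping when passing from $n$ to $n/q$ in the induction. One could instead try to deduce the statement from Theorem~\ref{cadd} by applying it to $\log|h|$, but the possible negativity of the $y_p$ makes the direct argument cleaner.
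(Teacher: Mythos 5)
Your proof is correct, and it is precisely the routine argument the paper has in mind — the authors explicitly omit the proofs of Theorems~\ref{cadd} and~\ref{cmult} as ``simple,'' and your induction on $n$ for (a)$\Rightarrow$(b) together with the direct verification of (b)$\Rightarrow$(a) via the complete additivity of $\nu_p$ is the standard way to fill that gap. Nothing further is needed.
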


We say that an arithmetic function $f$ is {\em Leibniz-additive}
(or {\em L-additive}, for short) if there is a nonzero-valued and c-multiplicative function $h_f$ such that 
\begin{equation}\label{leibniz}
f(mn)=f(m)h_f(n)+f(n)h_f(m)
\end{equation}
for all $m,n\in\mathbb{Z}_+$. 
Then $f(1)=0$, since $h_f(1)=1$. The property~\eqref{leibniz} may be considered
a generalized Leibniz rule. Substituting $m=n=p\in\mathbb{P}$ and
applying induction, we get
\begin{eqnarray}
\label{fpa}
f(p^a)=af(p)h(p)^{a-1}
\end{eqnarray}
for all $p\in\mathbb{P}$, $a\in\mathbb{Z}_+$.
 
The arithmetic subderivative~$D_S$ is L-additive with $h_{D_S}=N$, where $N$ is the
identity function $N(n)=n$. A c-additive
function~$g$ is L-additive with $h_g=E$, where $E(n)=1$ for all $n\in\mathbb{Z}_+$.
The arithmetic logarithmic subderivative~${\rm ld}_S$ is c-additive and hence L-additive.

This paper is a sequel to~\cite{HMT2}, where we defined L-additivity without
requiring that $h_f$ is nonzero-valued.
We begin by showing how the values of an L-additive function $f$ are determined in $\mathbb{Z}_+$ by the values of $f$ and $h_f$ at primes (Section 2) and then study under which conditions an arithmetic function~$f$ can be expressed as $f=gh$, where $g$ is c-additive and
$h$ is nonzero-valued and c-multiplicative (Section~3). It turns out that the same
conditions are necessary for L-additivity (Section 4). Finally, extending
Barbeau's~\cite{Ba} and Westrick's~\cite{We} results, we present some lower and upper bounds for an L-additive function (Section 5). We complete our paper with some remarks (Section 6).

\section{Constructing $f(n)$ and $h_f(n)$}
\label{Constructing}

An L-additive function~$f$ is not totally defined by its values at primes. Also,
the values of~$h_f$ at primes must be known.

\begin{theorem}
\label{ladd}
Let $f$ be an arithmetic function, and let
$(x_p)_{p\in\mathbb{P}}$ and $(y_p)_{p\in\mathbb{P}}$ be as in
Theorems~{\rm \ref{cadd}} and~{\rm \ref{cmult}}. The following conditions are equivalent:
\begin{enumerate}
\item[{\rm (a)}] $f$ is L-additive and
$f(p)=x_p$, $h_f(p)=y_p$ for all $p\in\mathbb{P}$;
\item[{\rm (b)}] for all $n\in\mathbb{Z}_+$,
$$
f(n)=\Big(\sum_{p\in\mathbb{P}}\nu_p(n)\frac{x_p}{y_p}\Big)
\prod_{p\in\mathbb{P}}
y_p^{\nu_p(n)}.
$$
\end{enumerate}
\end{theorem}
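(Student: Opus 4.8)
The plan is to prove the equivalence by establishing both directions, with the bulk of the work in $(a)\Rightarrow(b)$.

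For $(a)\Rightarrow(b)$, suppose $f$ is L-additive with the prescribed values at primes. I would first argue that the function $g=f/h_f$ is c-additive. Indeed, since $h_f$ is nonzero-valued, $g$ is well-defined; dividing the Leibniz rule~\eqref{leibniz} by $h_f(mn)=h_f(m)h_f(n)$ gives
$$
\frac{f(mn)}{h_f(mn)}=\frac{f(m)}{h_f(m)}+\frac{f(n)}{h_f(n)},
$$
so $g(mn)=g(m)+g(n)$. Moreover $g(p)=f(p)/h_f(p)=x_p/y_p$ for each prime~$p$. By Theorem~\ref{cadd} applied to $g$ with the sequence $(x_p/y_p)_{p\in\mathbb{P}}$, we obtain $g(n)=\sum_{p\in\mathbb{P}}\nu_p(n)(x_p/y_p)$ for all $n$. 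By Theorem~\ref{cmult} applied to $h_f$ with the sequence $(y_p)_{p\in\mathbb{P}}$, we obtain $h_f(n)=\prod_{p\in\mathbb{P}}y_p^{\nu_p(n)}$. Since $f=g\,h_f$, multiplying these two formulas yields exactly the expression in~(b).

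For $(b)\Rightarrow(a)$, I would start from the closed form in~(b) and define $h(n)=\prod_{p\in\mathbb{P}}y_p^{\nu_p(n)}$; by Theorem~\ref{cmult} this is nonzero-valued and c-multiplicative, with $h(p)=y_p$. Writing $f=g h$ where $g(n)=\sum_{p}\nu_p(n)(x_p/y_p)$ is c-additive by Theorem~\ref{cadd}, one checks the Leibniz rule directly: using additivity of the exponents $\nu_p(mn)=\nu_p(m)+\nu_p(n)$, both $g(mn)=g(m)+g(n)$ and $h(mn)=h(m)h(n)$ hold, whence
$$
f(mn)=g(mn)h(mn)=\bigl(g(m)+g(n)\bigr)h(m)h(n)=f(m)h(n)+f(n)h(m).
$$
Thus $f$ is L-additive with $h_f=h$, and the prime values are $f(p)=g(p)h(p)=(x_p/y_p)\,y_p=x_p$ and $h_f(p)=y_p$, as required.

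The only subtlety worth flagging is the uniqueness of the multiplicative companion $h_f$: the Leibniz rule alone does not pin down $h_f$ in general, so in $(a)\Rightarrow(b)$ one must use the \emph{given} data $h_f(p)=y_p$ rather than derive it. This is exactly why the statement packages both $(x_p)$ and $(y_p)$ into the hypothesis. Once that is kept straight, the argument is a routine combination of Theorems~\ref{cadd} and~\ref{cmult} via the factorization $f=g h$ with $g=f/h$; no genuine obstacle remains.
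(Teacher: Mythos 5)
Your proof is correct, but the direction $(a)\Rightarrow(b)$ takes a genuinely different route from the paper. The paper expands $f(n)$ over the prime factorization $n=p_1^{a_1}\cdots p_s^{a_s}$ by an induction on the number of distinct prime factors, invoking the identity $f(p^a)=af(p)h_f(p)^{a-1}$ from~\eqref{fpa} and then reorganizing the resulting sum into the closed form of~(b). You instead observe that dividing the Leibniz rule by $h_f(mn)=h_f(m)h_f(n)$ shows at once that $g=f/h_f$ is c-additive, and then read off~(b) from Theorems~\ref{cadd} and~\ref{cmult} applied to $g$ and $h_f$. This is shorter and cleaner, and it is not circular: the division trick uses only the Leibniz rule and the c-multiplicativity of $h_f$, not Theorem~\ref{ladd} itself. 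In effect you are proving the $gh$-decomposition (the content of Section~\ref{Decomposing} and Theorem~\ref{fgh}) directly and deriving Theorem~\ref{ladd} from it, whereas the paper goes the other way, first establishing Theorem~\ref{ladd} by explicit computation and only later extracting the decomposition $f=gh$. What the paper's computation buys is an explicit, self-contained derivation that also exercises~\eqref{fpa}; what your argument buys is brevity and a clearer view of the underlying structure. Your $(b)\Rightarrow(a)$ direction is essentially the paper's, phrased through the factorization $f=gh$ rather than by manipulating exponents directly, and your closing remark about having to take $h_f(p)=y_p$ as given data rather than deriving it is apt.
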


\begin{proof}
(a)$\Rightarrow$(b). Since $f(1)=0$, (b) holds for $n=1$. So, let $n>1$. Denoting
$$
\{p_1,\dots,p_s\}=\{p\in\mathbb{P}\mid\nu_p(n)>0\}
$$
and
$$
a_i=\nu_{p_i}(n),\quad i=1,\dots,s,
$$
we have
\begin{eqnarray*}
f(n)=\sum_{i=1}^sh_f(p_1)^{a_1}\cdots h_f(p_{i-1})^{a_{i-1}}f(p_i^{a_i})
h_f(p_{i+1})^{a_{i+1}}\cdots h_f(p_r)^{a_r}=\qquad
\\
\sum_{i=1}^sh_f(p_1)^{a_1}\cdots h_f(p_{i-1})^{a_{i-1}}a_if(p_i)h_f(p_i)^{a_i-1}
h_f(p_{i+1})^{a_{i+1}}\cdots h_f(p_r)^{a_r}=\qquad
\\
\sum_{p\in\mathbb{P}}\Big(\nu_p(n)f(p)h_f(p)^{\nu_p(n)-1}
\prod_{\substack{q\in\mathbb{P}\\q\ne p}}h_f(q)^{\nu_q(n)}\Big)=
\sum_{p\in\mathbb{P}}\Big(\nu_p(n)\frac{f(p)}{h_f(p)}
\prod_{q\in\mathbb{P}}h_f(q)^{\nu_q(n)}\Big)=
\\
\Big(\sum_{p\in\mathbb{P}}\nu_p(n)\frac{x_p}{y_p}\Big)
\prod_{p\in\mathbb{P}}y_p^{\nu_p(n)}.
\end{eqnarray*}
The first equation can be proved by induction on~$r$, the second holds
by~(\ref{fpa}), and the remaining equations are obvious.

(b)$\Rightarrow$(a). We define now
$$
h(n)=\prod_{p\in\mathbb{P}}y_p^{\nu_p(n)}.
$$
Let $m,n\in\mathbb{Z}_+$. Then
\begin{eqnarray*}
f(mn)=\Big(\sum_{p\in\mathbb{P}}\nu_p(mn)\frac{x_p}{y_p}\Big)
\prod_{p\in\mathbb{P}}y_p^{\nu_p(mn)}=
\\
\Big(\sum_{p\in\mathbb{P}}(\nu_p(m)+\nu_p(n))\frac{x_p}{y_p}\Big)
\prod_{p\in\mathbb{P}}y_p^{\nu_p(m)+\nu_p(n)}=
\\
\Big(\sum_{p\in\mathbb{P}}(\nu_p(m)+\nu_p(n))\frac{x_p}{y_p}\Big)
\Big(\prod_{p\in\mathbb{P}}y_p^{\nu_p(m)}\Big)
\Big(\prod_{p\in\mathbb{P}}y_p^{\nu_p(n)}\Big)=
\\
\Big(\sum_{p\in\mathbb{P}}\nu_p(m)\frac{x_p}{y_p}
\Big(\prod_{p\in\mathbb{P}}y_p^{\nu_p(m)}\Big)\Big)
\Big(\prod_{p\in\mathbb{P}}y_p^{\nu_p(n)}\Big)+
\\
\Big(\sum_{p\in\mathbb{P}}\nu_p(n)\frac{x_p}{y_p}
\Big(\prod_{p\in\mathbb{P}}y_p^{\nu_p(n)}\Big)\Big)
\Big(\prod_{p\in\mathbb{P}}y_p^{\nu_p(m)}\Big)=
\\
f(m)h(n)+f(n)h(m).
\end{eqnarray*}
So, $f$ is L-additive with $h_f=h$. It is clear that $f(p)=x_p$ and $h_f(p)=y_p$
for all $p\in\mathbb{P}$.
\end{proof}

Next, we construct~$h_f$ from~$f$. Let us denote
$$
U_f=\{p\in\mathbb{P}\mid f(p)\ne 0\},\quad V_f=\{p\in\mathbb{P}\mid f(p)=0\}.
$$
If $f=\theta$, where $\theta(n)=0$ for all $n\in\mathbb{Z}_+$, then
any $h_f$ applies. Hence, we now assume
that $f\ne\theta$. Then $U_f\ne\emptyset$.

Since
$$
f(p^2)=2f(p)h_f(p)
$$
by~(\ref{fpa}), we have
$$
h_f(p)=\frac{f(p^2)}{2f(p)}\quad{\rm for}\,\,p\in U_f.
$$
The case $p\in V_f$ remains. Let $q\in\mathbb{P}$. Then (\ref{leibniz}) implies that
$$
f(pq)=f(p)h_f(q)+f(q)h_f(p)=f(q)h_f(p).
$$
Therefore,
\begin{eqnarray}
\label{hfpq}
h_f(p)=\frac{f(pq)}{f(q)}\quad{\rm for}\,\,p\in V_f,
\end{eqnarray}
where $q\in U_f$ is arbitrary. Now, by Theorem~\ref{cmult},
\begin{eqnarray}
\label{hf}
h_f(n)=\Big(\prod_{p\in U_f}\Big(\frac{f(p^2)}{2f(p)}\Big)^{\nu_p(n)}\Big)
\Big(\prod_{p\in V_f}\Big(\frac{f(pq)}{f(q)}\Big)^{\nu_p(n)}\Big),
\end{eqnarray}
where $q\in U_f$ is arbitrary. (If $V_f=\emptyset$, then the latter factor is
the ``empty product'' one.)
We have thus proved the following theorem.

\begin{theorem}
\label{uniqhf}
If $f\ne\theta$ is L-additive, then
$h_f$ is unique and determined by~{\rm (\ref{hf})}.
\end{theorem}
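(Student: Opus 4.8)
The plan is to exploit Theorem~\ref{cmult}: a nonzero-valued c-multiplicative function is completely determined by its values at primes, so it suffices to show that for an L-additive $f\ne\theta$ the numbers $h_f(p)$, $p\in\mathbb{P}$, are forced by $f$ alone. First I would record that $f\ne\theta$ guarantees $U_f\ne\emptyset$, so we may fix some $q\in U_f$; this fixed $q$ is what lets us pin down $h_f$ on $V_f$.

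Next I would split into the two cases. For $p\in U_f$, substitute $a=2$ into~(\ref{fpa}) to get $f(p^2)=2f(p)h_f(p)$, and since $f(p)\ne0$ this gives $h_f(p)=f(p^2)/(2f(p))$, a quantity depending only on~$f$. For $p\in V_f$, apply~(\ref{leibniz}) to the pair $(p,q)$: as $f(p)=0$, the first summand drops out and $f(pq)=f(q)h_f(p)$, whence $h_f(p)=f(pq)/f(q)$, again depending only on~$f$ (and, a posteriori, independent of the choice of $q\in U_f$, since the left-hand side $h_f(p)$ does not). Having expressed every $h_f(p)$ in terms of $f$, I would invoke Theorem~\ref{cmult} with $y_p$ equal to these values to obtain the closed form~(\ref{hf}); the same argument yields uniqueness, since two nonzero-valued c-multiplicative functions both playing the role of $h_f$ for a given $f$ must agree at each prime and hence everywhere.

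The only genuine obstacle is the case $p\in V_f$: one needs at least one prime at which $f$ does not vanish in order to solve for $h_f(p)$, which is precisely why the hypothesis $f\ne\theta$ is indispensable — when $f=\theta$ every nonzero-valued c-multiplicative function satisfies~(\ref{leibniz}) trivially, so uniqueness fails. A minor point deserving a sentence is that all divisions are legitimate, since $f(p)\ne0$ for $p\in U_f$ and $f(q)\ne0$ for $q\in U_f$, and that the resulting values $h_f(p)$ are nonzero, consistent with the standing requirement that $h_f$ be nonzero-valued.
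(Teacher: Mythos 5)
Your proposal is correct and follows essentially the same route as the paper: it pins down $h_f(p)$ as $f(p^2)/(2f(p))$ for $p\in U_f$ via~(\ref{fpa}), as $f(pq)/f(q)$ for $p\in V_f$ using a fixed $q\in U_f$ via~(\ref{leibniz}), and then invokes Theorem~\ref{cmult} to recover~(\ref{hf}) and conclude uniqueness. No gaps; your added remarks on the legitimacy of the divisions and the independence of the choice of $q$ only make the argument more explicit than the paper's.
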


\section{Decomposing $f=gh$}
\label{Decomposing}

Let $f$ be an arithmetic function and let $h$ be a nonzero-valued and c-multiplicative
function. By Theorem~\ref{ladd}, $f$ is L-additive with $h_f=h$ if and only if
\begin{eqnarray}
\label{f}
f(n)=\Big(\sum_{p\in\mathbb{P}}\nu_p(n)\frac{f(p)}{h(p)}\Big)
\prod_{p\in\mathbb{P}}h(p)^{\nu_p(n)}=
\Big(\sum_{p\in\mathbb{P}}\nu_p(n)\frac{f(p)}{h(p)}\Big)h(n).
\end{eqnarray}
The function
$$
g(n)=\sum_{p\in\mathbb{P}}\nu_p(n)\frac{f(p)}{h(p)}
$$
is c-additive by Theorem~\ref{cadd}.

We say that an arithmetic function~$f$ is {\em gh-decomposable} if it has a {\em gh decomposition}
$$
f=gh,
$$
where $g$ is c-additive and $h$ is nonzero-valued and c-multiplicative. We saw above
that L-additivity implies $gh$-decomposability. Also, the converse holds.

\begin{theorem}
\label{fgh}
Let $f$ be an arithmetic function. The following conditions are equivalent:
\begin{enumerate}
\item[{\rm (a)}] $f$ is L-additive;
\item[{\rm (b)}] $f$ is $gh$-decomposable.
\end{enumerate}
\end{theorem}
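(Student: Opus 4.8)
The plan is to prove the equivalence by closing the loop with the discussion already given in Section~\ref{Decomposing}. One direction is essentially free: the paragraphs preceding the statement show that if $f$ is L-additive, then by Theorem~\ref{ladd} we may write $f = gh$ with $h = h_f$ and $g(n) = \sum_{p\in\mathbb{P}}\nu_p(n)\,f(p)/h_f(p)$, and Theorem~\ref{cadd} guarantees $g$ is c-additive. So (a)$\Rightarrow$(b) needs only to cite~\eqref{f} and the remark that $g$ is c-additive.

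For the converse (b)$\Rightarrow$(a), suppose $f = gh$ with $g$ c-additive and $h$ nonzero-valued and c-multiplicative. The strategy is to verify the generalized Leibniz rule~\eqref{leibniz} directly. Take $m,n\in\mathbb{Z}_+$. Then
\[
f(mn) = g(mn)h(mn) = \bigl(g(m)+g(n)\bigr)h(m)h(n) = g(m)h(m)\,h(n) + g(n)h(n)\,h(m),
\]
using c-additivity of $g$ in the first step and c-multiplicativity of $h$ in the second. Recognizing $g(m)h(m) = f(m)$ and $g(n)h(n) = f(n)$, this is exactly $f(mn) = f(m)h(n) + f(n)h(m)$. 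Since $h$ is nonzero-valued and c-multiplicative, $h$ is an admissible choice of $h_f$, so $f$ is L-additive.

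I expect no real obstacle here — the result is a bookkeeping identity once Theorem~\ref{ladd} is in hand. The only point worth a sentence of care is that the display above must commute a finite product and a finite sum, which is legitimate because in~\eqref{n} only finitely many $\nu_p$ are nonzero, so all the ``infinite'' products and sums appearing implicitly through $g$ and $h$ are really finite; alternatively one avoids this entirely by using the abstract functional equations $g(mn)=g(m)+g(n)$ and $h(mn)=h(m)h(n)$ directly, as done above, which sidesteps any convergence concern. A closing remark can note that this also re-derives the uniqueness of $h_f$ (Theorem~\ref{uniqhf}) in the nontrivial case, since $h_f$ must equal the $h$-factor of any $gh$ decomposition.
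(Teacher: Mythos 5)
Your proof is correct and follows essentially the same route as the paper: the forward direction is exactly the citation of the discussion preceding the theorem (via~\eqref{f} and Theorem~\ref{cadd}), and the converse is the same direct verification $f(mn)=g(mn)h(mn)=(g(m)+g(n))h(m)h(n)=f(m)h(n)+f(n)h(m)$. No changes needed.
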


\begin{proof}
(a)$\Rightarrow$(b). We proved this above.

(b)$\Rightarrow$(a). For all $m,n\in\mathbb{Z}_+$,
\begin{eqnarray*}
f(mn)=g(mn)h(mn)=(g(m)+g(n))h(m)h(n)=\qquad
\\
g(m)h(m)h(n)+g(n)h(n)h(m)=f(m)h(n)+f(n)h(m).
\end{eqnarray*}
Consequently, $f$ is L-additive with $h_f=h$.
\end{proof}

\begin{corollary}
\label{fghuniq}
Let $f\ne\theta$ be an arithmetic function. The following conditions are equivalent:
\begin{enumerate}
\item[{\rm (a)}] $f$ is L-additive;
\item[{\rm (b)}] $f$ is uniquely $gh$-decomposable.
\end{enumerate}
\end{corollary}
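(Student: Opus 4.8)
The plan is to read the corollary off from Theorem~\ref{fgh} and Theorem~\ref{uniqhf}: it differs from Theorem~\ref{fgh} only in replacing ``$gh$-decomposable'' by ``uniquely $gh$-decomposable,'' so the existence half of each implication is already available, and the real work is confined to the uniqueness of the decomposition, which should come from the uniqueness of $h_f$. The hypothesis $f\ne\theta$ is exactly what allows an appeal to Theorem~\ref{uniqhf}.

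For (b)$\Rightarrow$(a) there is nothing to do beyond Theorem~\ref{fgh}: a uniquely $gh$-decomposable function is in particular $gh$-decomposable, hence L-additive. For (a)$\Rightarrow$(b), existence of a $gh$ decomposition is again Theorem~\ref{fgh}, so I would only need to prove uniqueness. I would start from two decompositions $f=g_1h_1=g_2h_2$. The short computation in the proof of Theorem~\ref{fgh}, direction (b)$\Rightarrow$(a), shows that each of these makes $f$ L-additive with $h_f=h_i$; since $f\ne\theta$, Theorem~\ref{uniqhf} then forces $h_1=h_2$, call it $h$. Because a $gh$ decomposition requires $h$ to be nonzero-valued, I may divide and recover $g_i(n)=f(n)/h(n)$ for every $n\in\mathbb{Z}_+$, so $g_1=g_2$ as well, and the decomposition is unique.

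I do not expect a genuine obstacle here; the only points that need care are that the identification $h_f=h$ for a decomposition $f=gh$ is precisely what the proof of Theorem~\ref{fgh} supplies, and that division by $h(n)$ is legitimate because $h$ is nonzero-valued (this is also where $f\ne\theta$ is used: for $f=\theta$ one could take $g=\theta$ and any nonzero-valued c-multiplicative $h$, so uniqueness genuinely fails). As an alternative to invoking Theorem~\ref{uniqhf}, one could exhibit the decomposition explicitly, with $h$ the c-multiplicative function of~(\ref{hf}) and $g(n)=\sum_{p\in\mathbb{P}}\nu_p(n)f(p)/h(p)$ the associated c-additive function, but the argument via uniqueness of $h_f$ is shorter.
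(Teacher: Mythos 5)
Your proof is correct and follows essentially the same route as the paper: existence from Theorem~\ref{fgh}, uniqueness of $h_f$ from Theorem~\ref{uniqhf} (using $f\ne\theta$), and then $g=f/h_f$ is forced because $h_f$ is nonzero-valued. The paper's proof is just a terser version of your argument.
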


\begin{proof}
In proving (a)$\Rightarrow$(b), $h_f$ is unique by Theorem~\ref{uniqhf}. Since $h_f$ is
nonzero-valued, also $g=f/h_f$ is unique.
\end{proof}

For example, if $f=D_S$, then $g={\rm ld}_S$ and $h=N$.

By Theorem~\ref{uniqhf}, an L-additive function $f\ne\theta$ determines~$h_f$ uniquely. We consider next the converse problem: Given a nonzero-valued and c-multiplicative function~$h$, find an L-additive function~$f$
such that $h_f=h$.

\begin{theorem}
\label{ladduniq}
Let $(x_p)_{p\in\mathbb{P}}$ be a sequence of real numbers and let $h$
be nonzero-valued and c-multiplicative. There is a unique L-additive
function~$f$
with $h_f=h$ such that $f(p)=x_p$ for all $p\in\mathbb{P}$.
\end{theorem}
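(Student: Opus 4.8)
The plan is to invoke Theorem~\ref{ladd} directly, since it already characterizes L-additive functions with prescribed values of $f$ and $h_f$ at primes. Given the nonzero-valued c-multiplicative function~$h$, let $y_p = h(p)$ for all $p \in \mathbb{P}$; by Theorem~\ref{cmult} these are nonzero real numbers and $h(n) = \prod_{p} y_p^{\nu_p(n)}$. Together with the given sequence $(x_p)$, Theorem~\ref{ladd} produces an arithmetic function
$$
f(n) = \Big(\sum_{p\in\mathbb{P}}\nu_p(n)\frac{x_p}{y_p}\Big)\prod_{p\in\mathbb{P}}y_p^{\nu_p(n)}
$$
which is L-additive with $h_f(p) = y_p = h(p)$ for all~$p$, and with $f(p) = x_p$ for all~$p$. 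Since a c-multiplicative function is determined by its values at primes (Theorem~\ref{cmult}), $h_f = h$. This establishes existence.

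For uniqueness, I would argue as follows. Suppose $\tilde f$ is another L-additive function with $h_{\tilde f} = h$ and $\tilde f(p) = x_p$ for all~$p$. By Theorem~\ref{ladd} (direction (a)$\Rightarrow$(b)) applied to $\tilde f$ with the same sequences $(x_p)$ and $(y_p) = (h(p))$, we get that $\tilde f(n)$ equals exactly the same closed-form expression as $f(n)$ above, for every $n \in \mathbb{Z}_+$. Hence $\tilde f = f$. Note this covers the case $f = \theta$ (which occurs precisely when all $x_p = 0$) without any special treatment, so there is no need to split off the $\theta$ case the way Theorem~\ref{uniqhf} does.

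There is essentially no obstacle here: the theorem is an immediate repackaging of Theorem~\ref{ladd}, with the only mild point being the observation that fixing $h_f = h$ is equivalent to fixing $h_f(p) = h(p)$ for all primes, which is exactly Theorem~\ref{cmult}. One could alternatively phrase the whole proof through the $gh$-decomposition of Theorem~\ref{fgh}: take $g(n) = \sum_p \nu_p(n)\,x_p/h(p)$, which is c-additive by Theorem~\ref{cadd}, and set $f = gh$; then $f$ is L-additive with $h_f = h$ by Theorem~\ref{fgh}, $f(p) = g(p)h(p) = (x_p/h(p))h(p) = x_p$, and uniqueness follows since any L-additive $\tilde f$ with $h_{\tilde f} = h$ must satisfy $\tilde f = (\tilde f / h)\, h$ with $\tilde f/h$ forced to be the c-additive function taking value $x_p/h(p)$ at each prime, hence equal to~$g$. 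I would present the first, more direct argument and perhaps remark on the second.
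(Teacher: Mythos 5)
Your proof is correct and follows essentially the same route as the paper: both existence and uniqueness are read off from Theorem~\ref{ladd} with $y_p=h(p)$, using Theorem~\ref{cmult} to identify $h_f$ with $h$. Your uniqueness argument via the (a)$\Rightarrow$(b) direction is in fact slightly cleaner than the paper's, which instead invokes Corollary~\ref{fghuniq} and splits off the case $f=\theta$ (all $x_p=0$); as you observe, the explicit formula handles that case uniformly.
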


\begin{proof}
If at least one $x_p\ne 0$, then apply Theorem~\ref{ladd} and
Corollary~\ref{fghuniq}. Otherwise, $f=\theta$.
\end{proof}

We can now characterize $D_S$ and ${\rm ld}_S$.

\begin{corollary}
\label{dschar}
Let $f$ be an arithmetic function and $\emptyset\ne S\subseteq\mathbb{P}$. The following
conditions are equivalent:
\begin{enumerate}
\item[{\rm (a)}] $f$ is L-additive, $h_f=N$, $f(p)=1$ for $p\in S$, and $f(p)=0$
for $p\in\mathbb{P}\setminus S$;
\item[{\rm (b)}] $f=D_S$.
\end{enumerate}
\end{corollary}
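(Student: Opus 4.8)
The plan is to prove the equivalence in Corollary~\ref{dschar} by a direct, two-way argument, leaning on the characterisation of L-additive functions established in Section~\ref{Constructing} and the explicit formula for $D_S$ from the introduction. The essential observation is that both $D_S$ and the function $f$ described in~(a) are L-additive with the \emph{same} companion $h_f = N$ and with \emph{identical values at every prime}, so they must coincide; the uniqueness part of Theorem~\ref{ladduniq} does almost all the work.

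First I would handle (b)$\Rightarrow$(a). Recall from the introduction that $D_S$ is L-additive with $h_{D_S}=N$, so it remains only to read off the values $D_S(p)$ at primes. For a prime $p$ we have $\nu_q(p)=1$ if $q=p$ and $0$ otherwise, hence $D_S(p)=p\sum_{q\in S}\nu_q(p)/q$, which equals $p\cdot(1/p)=1$ when $p\in S$ and equals $0$ when $p\in\mathbb{P}\setminus S$. This is exactly the prescription in~(a), so (a) holds.

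Next, (a)$\Rightarrow$(b). Suppose $f$ is L-additive with $h_f=N$ and with $f(p)=1$ for $p\in S$, $f(p)=0$ for $p\notin S$. Since $S\neq\emptyset$, at least one $x_p=1\neq0$, so $f\neq\theta$. Now $N$ is nonzero-valued and c-multiplicative (being $N(n)=\prod_p p^{\nu_p(n)}=n$), and we have just seen that $D_S$ is L-additive with $h_{D_S}=N$ and the same values at primes. By Theorem~\ref{ladduniq}, there is a \emph{unique} L-additive function with companion $N$ taking the prescribed values at primes; therefore $f=D_S$. Alternatively, one may invoke Theorem~\ref{ladd}(b) directly: with $y_p=N(p)=p$ and $x_p$ as given, the formula yields $f(n)=\bigl(\sum_{p\in\mathbb{P}}\nu_p(n)x_p/p\bigr)\prod_{p\in\mathbb{P}}p^{\nu_p(n)} = \bigl(\sum_{p\in S}\nu_p(n)/p\bigr)n = D_S(n)$, since $x_p/p=1/p$ for $p\in S$ and $x_p/p=0$ otherwise.

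I do not anticipate a genuine obstacle here: the corollary is a bookkeeping consequence of Theorems~\ref{ladd} and~\ref{ladduniq}. The only point requiring a moment's care is confirming that $N$ indeed satisfies the hypotheses on $h$ (nonzero-valued and completely multiplicative) so that Theorem~\ref{ladduniq} applies, and that the set $S$ being non-empty guarantees $f\neq\theta$ so the uniqueness statement is in force; both are immediate. If one prefers to avoid uniqueness altogether, the computational route via Theorem~\ref{ladd}(b) in the last paragraph is completely self-contained.
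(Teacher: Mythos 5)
Your proof is correct and follows the route the paper intends: the paper states this corollary without proof as an immediate consequence of Theorem~\ref{ladduniq}, and your argument (together with the self-contained computation via Theorem~\ref{ladd}(b)) simply fills in that routine verification.
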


\begin{corollary}
\label{ldschar}
Let $g$ be an arithmetic function and $\emptyset\ne S\subseteq\mathbb{P}$. The following
conditions are equivalent:
\begin{enumerate}
\item[{\rm (a)}] $g$ is c-additive, $g(p)=1/p$ for $p\in S$, and $g(p)=0$
for $p\in\mathbb{P}\setminus S$;
\item[{\rm (b)}] $g={\rm ld}_S$.
\end{enumerate}
\end{corollary}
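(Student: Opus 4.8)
The plan is to obtain both implications as immediate specializations of Theorem~\ref{cadd} together with the defining formula ${\rm ld}_S(n)=\sum_{p\in S}\nu_p(n)/p$ from the introduction. Concretely, I would apply Theorem~\ref{cadd} to the sequence $(x_p)_{p\in\mathbb{P}}$ given by $x_p=1/p$ for $p\in S$ and $x_p=0$ for $p\in\mathbb{P}\setminus S$.

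For (a)$\Rightarrow$(b): if $g$ is c-additive with $g(p)=x_p$ for all $p\in\mathbb{P}$, then Theorem~\ref{cadd} yields $g(n)=\sum_{p\in\mathbb{P}}\nu_p(n)x_p$ for every $n\in\mathbb{Z}_+$. Since the coefficients with $p\notin S$ vanish, this sum collapses to $\sum_{p\in S}\nu_p(n)/p={\rm ld}_S(n)$, and hence $g={\rm ld}_S$.

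For (b)$\Rightarrow$(a): if $g={\rm ld}_S$, then $g$ is c-additive — this was already noted in the introduction, and it also follows either from Theorem~\ref{cadd} applied to the same sequence $(x_p)$ or from the identity ${\rm ld}_S=D_S/N$ combined with the discussion in Section~\ref{Decomposing}. It then remains only to evaluate $g$ at a prime $p$: using $\nu_q(p)=1$ when $q=p$ and $\nu_q(p)=0$ when $q\ne p$, one gets ${\rm ld}_S(p)=\sum_{q\in S}\nu_q(p)/q$, which equals $1/p$ if $p\in S$ and $0$ if $p\in\mathbb{P}\setminus S$. This is precisely~(a).

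I do not expect any genuine obstacle: the corollary is essentially Theorem~\ref{cadd} read for one particular sequence of prime values, and the only bookkeeping point is that the all-primes sum in Theorem~\ref{cadd} automatically truncates to the $S$-sum in the definition of ${\rm ld}_S$ because the remaining coefficients are zero. Alternatively, one could divide the equivalence in Corollary~\ref{dschar} through by $N$, but the direct argument via Theorem~\ref{cadd} seems cleanest.
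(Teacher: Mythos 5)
Your argument is correct and matches the route the paper intends: the corollary is stated without proof precisely because it is an immediate instance of Theorem~\ref{cadd} applied to the sequence $x_p=1/p$ for $p\in S$ and $x_p=0$ otherwise, with the all-primes sum collapsing to the definition of ${\rm ld}_S$. Nothing is missing.
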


\section{Conditions for L-additivity}
\label{Conditions}

Let $f\ne\theta$ be L-additive and $a,b\in\mathbb{N}$.

First, let $p\in\mathbb{P}$. By~(\ref{fpa}),
\begin{eqnarray}
\label{fpeqs1}
f(p^{a+1})=(a+1)f(p)h_f(p)^a,\quad f(p^{b+1})=(b+1)f(p)h_f(p)^b,
\end{eqnarray}
and, further,
\begin{eqnarray}
\label{fpeqs2}
f(p^{a+1})^b=(a+1)^bf(p)^bh_f(p)^{ab},\,\,
f(p^{b+1})^a=(b+1)^af(p)^ah_f(p)^{ba}.
\end{eqnarray}
Assume now that $p\in U_f$. Then the right-hand sides of the equations in~(\ref{fpeqs1})
are nonzero and
$f(p^{a+1}),f(p^{b+1})\ne 0$. Therefore, by~(\ref{fpeqs2}),
$$
\frac{f(p^{a+1})^b}{f(p^{b+1})^a}=\frac{(a+1)^bf(p)^b}{(b+1)^af(p)^a}
$$
or, equivalently,
$$
\Big(\frac{f(p^{a+1})}{(a+1)f(p)}\Big)^b=\Big(\frac{f(p^{b+1})}{(b+1)f(p)}\Big)^a.
$$

Second, assume that $U_f$ has at least two elements. If $p,q\in U_f$, then (\ref{leibniz}) and~(\ref{fpa}) imply that
\begin{eqnarray*}
f(p^aq^b)=f(p^a)h_f(q^b)+f(q^b)h_f(p^a)=f(p^a)h_f(q)^b+f(q^b)h_f(p)^a=
\\
\frac{f(p^a)f(q^{b+1})}{(b+1)f(q)}+\frac{f(q^b)f(p^{a+1})}{(a+1)f(p)}.
\end{eqnarray*}

Third, assume additionally that $V_f\ne\emptyset$. Let $p\in V_f$ and $q_1,q_2\in U_f$.
By~(\ref{hfpq}) and the fact that $h_f$ is nonzero-valued,
$$
\frac{f(pq_1)}{f(q_1)}=\frac{f(pq_2)}{f(q_2)}\ne 0.
$$
In other words, we can ``cancel''~$p$ in
$$
\frac{f(pq_1)}{f(pq_2)}=\frac{f(q_1)}{f(q_2)}\ne 0.
$$

Fourth, both the nonzero-valuedness of~$h_f$ and~(\ref{hf}) imply that
$$
f(p^2)\ne 0\quad{\rm for\,all}\,\,p\in U_f.
$$

We have thus found necessary conditions for L-additivity.

\begin{theorem}
\label{neccnds}
Let $f\ne\theta$ be L-additive and $a,b\in\mathbb{N}$. 
\begin{enumerate}
\item[{\rm (i)}] If $p\in U_f$, then
$$
\Big(\frac{f(p^{a+1})}{(a+1)f(p)}\Big)^b=\Big(\frac{f(p^{b+1})}{(b+1)f(p)}\Big)^a.
$$
\item[{\rm (ii)}] If $p,q\in U_f$, then
$$
f(p^aq^b)=\frac{f(p^a)f(q^{b+1})}{(b+1)f(q)}+\frac{f(q^b)f(p^{a+1})}{(a+1)f(p)}.
$$
\item[{\rm (iii)}] If $p\in V_f$ and $q_1,q_2\in U_f$, then
$$
\frac{f(pq_1)}{f(pq_2)}=\frac{f(q_1)}{f(q_2)}\ne 0.
$$
\item[{\rm (iv)}] If $p\in U_f$, then
$$
f(p^2)\ne 0.
$$
\end{enumerate}
\end{theorem}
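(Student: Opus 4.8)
The plan is to obtain each of the four assertions by a direct computation from the generalized Leibniz rule~\eqref{leibniz} together with its special case~\eqref{fpa}, invoking the hypotheses ``$h_f$ nonzero-valued'' and ``$h_f$ c-multiplicative'' only to justify the divisions that appear. The one idea worth isolating is that~\eqref{fpa} can be read as a formula for the powers of $h_f(p)$: if $p\in U_f$ then $f(p)\ne 0$, so $f(p^{a+1})=(a+1)f(p)h_f(p)^a$ gives $h_f(p)^a=f(p^{a+1})/\bigl((a+1)f(p)\bigr)$, and similarly with $b$; moreover each such quantity is nonzero because $h_f(p)\ne 0$. Everything else is bookkeeping.

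First, for (i), I would raise the expression $h_f(p)^a=f(p^{a+1})/\bigl((a+1)f(p)\bigr)$ to the $b$-th power and the expression $h_f(p)^b=f(p^{b+1})/\bigl((b+1)f(p)\bigr)$ to the $a$-th power; since both sides then equal $h_f(p)^{ab}$, the stated identity follows. For (ii), I would apply~\eqref{leibniz} with $m=p^a$ and $n=q^b$ to get $f(p^aq^b)=f(p^a)h_f(q^b)+f(q^b)h_f(p^a)$, rewrite $h_f(q^b)=h_f(q)^b$ and $h_f(p^a)=h_f(p)^a$ by c-multiplicativity, and then substitute the formulas for $h_f(q)^b$ and $h_f(p)^a$ from the previous paragraph (legitimate since $p,q\in U_f$). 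For (iii), I would apply~\eqref{leibniz} with $m=p\in V_f$ and $n=q_i\in U_f$; because $f(p)=0$ this collapses to $f(pq_i)=f(q_i)h_f(p)$, so $f(pq_i)/f(q_i)=h_f(p)$ for $i=1,2$, and equating the two and clearing denominators yields $f(pq_1)/f(pq_2)=f(q_1)/f(q_2)$, whose common value is nonzero since $f(q_1),f(q_2)\ne 0$ and $h_f(p)\ne 0$. For (iv), I would just take $a=2$ in~\eqref{fpa}, obtaining $f(p^2)=2f(p)h_f(p)$, a product of nonzero numbers.

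I do not expect a genuine obstacle here; the points that need attention are purely about hygiene. One should confirm that the identities in (i) and (ii) remain correct when $a=0$ or $b=0$ (both sides collapse to trivial identities), note that (ii) does not require $p\ne q$ so no separate case is needed, and check that every denominator that occurs — namely $(a+1)f(p)$, $(b+1)f(q)$, $f(q_i)$, and $f(pq_2)=f(q_2)h_f(p)$ — is nonzero, which is exactly what $p,q,q_i\in U_f$ and the nonzero-valuedness of $h_f$ guarantee. So the ``hard part'' amounts to nothing more than laying out the bookkeeping cleanly.
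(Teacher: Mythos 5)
Your proposal is correct and follows essentially the same route as the paper: (i) and (iv) come from reading~\eqref{fpa} as a formula for powers of $h_f(p)$, (ii) from~\eqref{leibniz} with $m=p^a$, $n=q^b$ plus that same substitution, and (iii) from the collapse $f(pq_i)=f(q_i)h_f(p)$ when $f(p)=0$, which is exactly the paper's identity~\eqref{hfpq}. The only cosmetic difference is in (i), where you equate both expressions to $h_f(p)^{ab}$ rather than forming the quotient $f(p^{a+1})^b/f(p^{b+1})^a$ as the paper does; the content is identical.
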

The question about the sufficiency of these conditions remains open.

To find sufficient conditions for L-additivity, we  study under which conditions we can
apply the procedure described in the proof of Theorem~\ref{uniqhf}
to a given arithmetic function $f\ne\theta$. The function~$h$,
defined as $h_f$ in~(\ref{hf}), must be ($\alpha$)~well-defined, ($\beta$)~c-multiplicative, and
($\gamma$)~nonzero-valued. Condition~($\alpha$) follows from~(iii),
($\beta$) is obvious, and ($\gamma$) follows from (iii) and~(iv).
If the function $g=f/h$ is also c-additive, then $f$ is L-additive
by Theorem~\ref{fgh}. So, we have found sufficient conditions for L-additivity,
and they are obviously also necessary.

\begin{theorem}
\label{necsuffcnds}
An arithmetic function $f\ne\theta$ is L-additive if and only if {\rm (iii)}
and~{\rm (iv)}
in Theorem~{\rm \ref{neccnds}} are satisfied and the function $f/h$ is
c-additive, where
$$
h(n)=\Big(\prod_{p\in U_f}\Big(\frac{f(p^2)}{2f(p)}\Big)^{\nu_p(n)}\Big)
\Big(\prod_{p\in V_f}\Big(\frac{f(pq)}{f(q)}\Big)^{\nu_p(n)}\Big),
\quad q\in U_f.
$$
\end{theorem}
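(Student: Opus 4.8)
The plan is to observe that both implications have already been assembled in the discussion preceding the theorem, and merely to package them. There is no new computation to perform; the work is in citing the right earlier results and in pinning down that the explicitly given $h$ really is the function that machinery produces.

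For the ``only if'' direction, suppose $f\ne\theta$ is L-additive. Conditions~(iii) and~(iv) then hold immediately, being two of the four necessary conditions recorded in Theorem~\ref{neccnds}. It remains to identify $h$ with $h_f$ and to check that $f/h$ is c-additive. By Theorem~\ref{uniqhf}, $h_f$ is unique and is given exactly by formula~(\ref{hf}), which is the function $h$ displayed in the statement; hence $h=h_f$, so $h$ is in particular nonzero-valued and c-multiplicative. By~(\ref{f}) (equivalently, by the gh-decomposition produced in Theorem~\ref{fgh} together with its uniqueness in Corollary~\ref{fghuniq}) we have $f=g h_f$ with $g(n)=\sum_{p\in\mathbb{P}}\nu_p(n)\,f(p)/h_f(p)$, and $g$ is c-additive by Theorem~\ref{cadd}. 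Therefore $f/h=f/h_f=g$ is c-additive, as required.

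For the ``if'' direction, assume~(iii), (iv), and that $f/h$ is c-additive; this hypothesis presupposes that $h$ is a well-defined nonzero-valued function, and the first thing I would do is confirm that this is genuinely forced by~(iii) and~(iv). The factors $(f(p^2)/2f(p))^{\nu_p(n)}$ over $p\in U_f$ are unambiguous since $f(p)\ne0$ there, and nonzero since $f(p^2)\ne0$ by~(iv); the factors $(f(pq)/f(q))^{\nu_p(n)}$ over $p\in V_f$ are independent of the chosen $q\in U_f$ precisely by~(iii), and nonzero because~(iii) also gives $f(pq)\ne0$. (Here $U_f\ne\emptyset$: otherwise $V_f=\mathbb{P}$, in which case the $V_f$-factors cannot even be written down and the hypothesis is vacuous, while $U_f=V_f=\emptyset$ is impossible since $U_f\cup V_f=\mathbb{P}$.) Thus $h$ is a well-defined nonzero-valued function of the form $h(n)=\prod_{p\in\mathbb{P}}y_p^{\nu_p(n)}$ with every $y_p\ne0$, hence c-multiplicative by Theorem~\ref{cmult}. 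Setting $g=f/h$, which is c-additive by assumption, we obtain the gh-decomposition $f=gh$, whence $f$ is L-additive by Theorem~\ref{fgh}.

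No step here is genuinely hard; the only point requiring care is the well-definedness of $h$ in the ``if'' direction---the independence of the $V_f$-factors from the choice of $q$---which is exactly what condition~(iii) supplies, with~(iv) added to secure nonvanishing. Everything else is bookkeeping layered on Theorems~\ref{cadd}, \ref{cmult}, \ref{fgh}, and~\ref{uniqhf}.
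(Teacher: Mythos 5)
Your proof is correct and follows essentially the same route as the paper: necessity via Theorems~\ref{neccnds} and~\ref{uniqhf} together with the $gh$-decomposition, and sufficiency by checking that (iii) and (iv) make $h$ well-defined, nonzero-valued, and c-multiplicative, then invoking Theorem~\ref{fgh}. Your explicit remark that the hypothesis forces $U_f\ne\emptyset$ is a small point of care the paper leaves implicit, but it does not change the argument.
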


\section{Bounds for an L-additive function}
\label{Bounds}

Let us express~(\ref{n}) as
\begin{eqnarray}
\label{nn}
n=q_1\cdots q_r,
\end{eqnarray}
where $q_1,\dots,q_r\in\mathbb{P}$, $q_1\le\dots\le q_r$.
We first recall the well-known bounds for~$D(n)$ using $n$ and $r$ only.

\begin{theorem}
\label{dbndsthm}
Let $n$ be as in~{\rm (\ref{nn})}. Then
\begin{eqnarray}
\label{dbnds}
rn^\frac{r-1}{r}\le D(n)\le\frac{rn}{2}\le\frac{n\log_2{n}}{2}.
\end{eqnarray}
Equality is attained in the upper bounds if and only if $n$ is a power of~$2$,
and in the lower bound if and only if $n$ is a prime or a power of~$2$.
\end{theorem}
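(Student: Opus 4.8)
The plan is to establish the three inequalities in~\eqref{dbnds} separately, together with the equality conditions, using the explicit formula $D(n)=n\sum_{i=1}^r 1/q_i$ that follows from writing $n=q_1\cdots q_r$ and applying the definition of the arithmetic derivative (i.e. $D=D_{\mathbb P}$, so $\mathrm{ld}(n)=\sum_p \nu_p(n)/p=\sum_{i=1}^r 1/q_i$). Thus $D(n)/n=\sum_{i=1}^r 1/q_i$, and the whole theorem reduces to bounding this sum of reciprocals of the prime factors of~$n$ (with multiplicity).

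For the chain $D(n)\le rn/2\le (n\log_2 n)/2$: the first step is immediate since each $q_i\ge 2$ gives $\sum 1/q_i\le r/2$; equality holds iff every $q_i=2$, i.e.\ $n$ is a power of~$2$. The second step, $r/2\le (\log_2 n)/2$, follows from $n=q_1\cdots q_r\ge 2^r$, so $r\le\log_2 n$, with equality again iff $n=2^r$. This handles the two upper bounds and their equality cases at once.

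For the lower bound $rn^{(r-1)/r}\le D(n)$, equivalently $n^{-1/r}\le \frac1r\sum_{i=1}^r 1/q_i$, I would invoke the AM--GM inequality on the positive reals $1/q_1,\dots,1/q_r$: their arithmetic mean is at least their geometric mean, which is $\bigl(\prod 1/q_i\bigr)^{1/r}=(1/n)^{1/r}=n^{-1/r}$. Multiplying through by $rn$ gives $D(n)=n\sum 1/q_i\ge rn\cdot n^{-1/r}=rn^{(r-1)/r}$. Equality in AM--GM holds iff all the $1/q_i$ are equal, i.e.\ $q_1=\dots=q_r$, which means $n=q^r$ for a single prime~$q$; this includes both the case $r=1$ ($n$ prime, where the bound reads $n\le D(n)$ with $D(n)=1\cdot n^{0}\cdot$, actually $D(p)=1$ and $rn^{(r-1)/r}=1$) and the case $q=2$ ($n$ a power of~$2$). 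I should double-check the degenerate case $r=1$ by hand: then the lower bound is $1\cdot n^{0}=1=D(p)$, consistent; and powers of a prime $q\ge 3$ also attain it.

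The main obstacle is bookkeeping the equality conditions carefully and verifying they match the stated ones: the upper bounds are attained exactly when all prime factors are~$2$, while the lower bound is attained exactly when all prime factors coincide (any prime, or the trivial single-factor case), i.e.\ when $n$ is a prime power — which the theorem phrases as ``$n$ is a prime or a power of~$2$,'' a formulation I would want to reconcile (presumably the intended reading groups $p^k$ for $k\ge 2$ with powers of~$2$, or treats only these cases as noteworthy; in any event $n=p$ and $n=2^k$ are the listed instances and both are prime powers). No deep input is needed beyond AM--GM and the trivial bound $n\ge 2^r$; the argument is short once the formula $D(n)=n\sum 1/q_i$ is in hand.
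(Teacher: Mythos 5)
Your argument is correct, and it is necessarily a different ``route'' from the paper's, because the paper gives no proof at all for this theorem --- it simply cites Barbeau and Ufnarovski--\AA hlander. Your self-contained derivation from $D(n)=n\sum_{i=1}^r 1/q_i$ is exactly the standard one: the two upper bounds follow from $q_i\ge 2$ and $n\ge 2^r$, and the lower bound from AM--GM applied to $1/q_1,\dots,1/q_r$; all steps and the identification of when each inequality is tight are sound. The one point you hedged on deserves a firmer resolution: your AM--GM analysis shows equality in the lower bound holds precisely when $q_1=\dots=q_r$, i.e.\ when $n$ is a prime power, and this is the \emph{correct} condition --- for instance $n=9$ gives $D(9)=6=2\cdot 9^{1/2}$, so equality is attained although $9$ is neither a prime nor a power of $2$. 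So the theorem's stated equality condition for the lower bound (``$n$ is a prime or a power of~$2$'') is too narrow, and you should trust your computation rather than try to reconcile it with the phrasing; your proof in effect corrects the statement. Two trivial housekeeping items: the case $r=0$ (i.e.\ $n=1$) must be excluded for the lower bound to make sense, and in the sentence where you compute $D(p)$ the stray ``$1\cdot n^{0}\cdot$'' should be cleaned up, but neither affects the mathematics.
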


\begin{proof}
See \cite[pp. 118--119]{Ba}, \cite[Theorem~9]{UA}.
\end{proof}

The first upper bound can be improved using the same information.
Westrick~\cite[Ineq.~(6)]{We} presented in her thesis the following bound
without proof.

\begin{theorem}
\label{westrthm}
Let $n$ be as in~{\rm (\ref{nn})}. Then
\begin{eqnarray}
\label{westr}
D(n)\le\frac{r-1}{2}n+2^{r-1}.
\end{eqnarray}
Equality is attained if and only if $n\in\mathbb{P}$ or $q_1=\dots=q_{r-1}=2$.
\end{theorem}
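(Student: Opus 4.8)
The plan is to prove the bound $D(n)\le\frac{r-1}{2}n+2^{r-1}$ by induction on $r$, the number of prime factors of $n$ counted with multiplicity. The base case $r=1$ is immediate: if $n=q_1\in\mathbb{P}$, then $D(n)=1=\frac{0}{2}n+2^0$, so equality holds, consistent with the claimed equality condition. For the inductive step, write $n=q_1\cdots q_r$ with $q_1\le\dots\le q_r$ and set $m=q_1\cdots q_{r-1}$, so $n=mq_r$ and $m$ has $r-1$ prime factors. The Leibniz rule for $D=D_{\mathbb P}$ (with $h_D=N$) gives $D(n)=D(m)q_r+D(q_r)m=D(m)q_r+m$. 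Applying the induction hypothesis $D(m)\le\frac{r-2}{2}m+2^{r-2}$ yields
\begin{eqnarray*}
D(n)\le\Big(\frac{r-2}{2}m+2^{r-2}\Big)q_r+m=\frac{r-2}{2}mq_r+2^{r-2}q_r+m=\frac{r-2}{2}n+2^{r-2}q_r+m.
\end{eqnarray*}

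So it remains to show $\frac{r-2}{2}n+2^{r-2}q_r+m\le\frac{r-1}{2}n+2^{r-1}$, i.e.
$$
2^{r-2}q_r+m\le\frac{n}{2}+2^{r-1}=\frac{mq_r}{2}+2^{r-1},
$$
which rearranges to $2^{r-2}q_r-2^{r-1}\le\frac{mq_r}{2}-m$, that is, $2^{r-2}(q_r-2)\le\frac{m}{2}(q_r-2)$, i.e. $(q_r-2)\big(2^{r-2}-\frac{m}{2}\big)\le 0$. Since $q_r\ge 2$, the factor $q_r-2\ge0$, so the inequality holds precisely when $\frac{m}{2}\ge 2^{r-2}$, i.e. $m\ge 2^{r-1}$. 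This is true because $m$ is a product of $r-1$ primes, each $\ge 2$, so $m\ge 2^{r-1}$. This establishes the inequality. Note there is a subtlety: the induction hypothesis is legitimately applied to $m$, but $m$ may be $1$ only when $r=1$, which is the already-handled base case, so for $r\ge 2$ we genuinely have $m\ge 2$ and $m$ expressed as in~\eqref{nn} with $r-1$ factors.

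For the equality characterization, I would track when each inequality above is tight. Equality in the final step $(q_r-2)(2^{r-2}-\frac{m}{2})\le 0$ forces either $q_r=2$ (whence $q_1=\dots=q_r=2$, so $n=2^r$, a subcase of $q_1=\dots=q_{r-1}=2$) or $m=2^{r-1}$ (i.e. $q_1=\dots=q_{r-1}=2$). Equality in the inductive application requires $D(m)=\frac{r-2}{2}m+2^{r-2}$, which by the induction hypothesis's equality clause means $m\in\mathbb P$ (only when $r-1=1$) or $q_1=\dots=q_{r-2}=2$. One checks these conditions are jointly consistent exactly with the stated characterization $n\in\mathbb P$ or $q_1=\dots=q_{r-1}=2$: if $q_1=\dots=q_{r-1}=2$, both the final-step equality (via $m=2^{r-1}$) and the inductive equality (via $q_1=\dots=q_{r-2}=2$) hold, and conversely any equality case collapses to this or to $r=1$.

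The main obstacle is not the inequality itself — which is a short computation once the Leibniz rule splits off the largest prime — but the bookkeeping in the equality analysis, where one must carefully combine the two sources of possible slack (the induction hypothesis and the new estimate) and confirm they do not produce spurious extra equality cases beyond $n\in\mathbb P$ and $q_1=\dots=q_{r-1}=2$. A mild care point is also the choice to peel off $q_r$ (the largest prime) rather than $q_1$: this is what makes the factor $2^{r-2}-\frac m2$ have a controllable sign, since bounding $m$ below by $2^{r-1}$ needs no information about which primes were removed, but one should double-check the argument does not secretly require $q_r$ to be large.
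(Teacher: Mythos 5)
Your proof is correct, and it takes a genuinely different route from the paper's. The paper works directly from $D(n)=n\sum_i 1/q_i$ and splits into three cases according to how many of the small primes equal $2$: the case $q_1=\dots=q_{r-1}=2$ is an exact identity, the case $q_{r-2}=2<q_{r-1}$ is handled via $\frac{1}{q_{r-1}}+\frac{1}{q_r}=\frac12+\frac{4-(q_{r-1}-2)(q_r-2)}{2q_{r-1}q_r}<\frac12+\frac{2}{q_{r-1}q_r}$, and the case $q_{r-2}>2$ by bounding three reciprocals by $\frac13$ each. You instead induct on $r$, peel off $q_r$ with the Leibniz rule, and reduce the inductive step to the sign of $(q_r-2)\bigl(2^{r-2}-\tfrac{m}{2}\bigr)$, which is settled by the trivial bound $m\ge 2^{r-1}$; this replaces the paper's three cases with a single factored inequality. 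Your closing worry about needing to remove the \emph{largest} prime is unfounded: the same factorization appears with $q_1$ in place of $q_r$, and the bound $m\ge 2^{r-1}$ uses nothing about which prime was removed. What the induction costs is the equality bookkeeping, since tightness is needed both in the inductive hypothesis and in the new step; your treatment of this is sound --- tightness of the new step forces $q_r=2$ or $m=2^{r-1}$, either of which yields $q_1=\dots=q_{r-1}=2$, and that condition in turn makes the inductive equality automatic because $D(2^{r-1})=(r-1)2^{r-2}=\frac{r-2}{2}\cdot 2^{r-1}+2^{r-2}$ --- though the final ``one checks'' deserves these two sentences written out. The paper's case split, by contrast, delivers the equality conditions with no extra work, since each case is visibly either an identity or a strict inequality.
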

\begin{proof}
If $r=1$ (i.e., $n\in\mathbb{P}$), then (\ref{westr}) clearly holds with equality.
So, assume that $r>1$.

{\em Case 1}. $q_1=\dots=q_{r-1}=2$. Then
$$
D(n)=n\Big(\frac{r-1}{2}+\frac{1}{q_r}\Big)=
\frac{r-1}{2}n+\frac{n}{n/2^{r-1}}={\rm rhs(\ref{westr})},
$$
where ``rhs'' is short for ``the right-hand side''.

{\em Case 2}. $q_1=\dots=q_{r-2}=2$ (omit this if $r=2$) and $q_{r-1}>2$. Since
$$
\frac{1}{q_{r-1}}+\frac{1}{q_r}=
\frac{1}{2}+\frac{4-(q_{r-1}-2)(q_r-2)}{2q_{r-1}q_r}<
\frac{1}{2}+\frac{2}{q_{r-1}q_r},
$$
we have
$$
D(n)<n\Big(\frac{r-2}{2}+\frac{1}{2}+\frac{2}{q_{r-1}q_r}\Big)=
\frac{r-1}{2}n+\frac{2n}{n/2^{r-2}}={\rm rhs(\ref{westr})}.
$$

{\em Case 3}. $q_{r-2}>2$. Then $r\ge 3$ and
$$
D(n)\le n\Big(\frac{r-3}{2}+\frac{1}{3}+\frac{1}{3}+\frac{1}{3}\Big)=
\frac{r-1}{2}n<{\rm rhs(\ref{westr})}.
$$
The claim with equality conditions is thus verified. Because
$$
\frac{rn}{2}-\Big(\frac{r-1}{2}n+2^{r-1}\Big)=\frac{n}{2}-2^{r-1}\ge
\frac{2^r}{2}-2^{r-1}=0,
$$
the upper bound (\ref{westr}) indeed improves~(\ref{dbnds}).
\end{proof}

We extend the upper bounds~(\ref{dbnds}) and~(\ref{westr}) under the assumption
\begin{eqnarray}
\label{hassump}
h_f(p)\ge p\quad{\rm for\,\,all\,\,}p\in U_f.
\end{eqnarray}
Let $n$ in~(\ref{nn}) have $q_{i_1},\dots,q_{i_s}\in U_f$. We
denote
\begin{eqnarray}
\label{s}
p_1=q_{i_1},\,\dots,\,p_s=q_{i_s}
\end{eqnarray}
and
\begin{eqnarray}
\label{Mm}
M=\max_{1\le i\le r}f(q_i)=\max_{1\le i\le s}f(p_i).
\end{eqnarray}

\begin{theorem}
Let $f\ne\theta$ be nonnegative and L-additive satisfying~{\rm (\ref{hassump})}.
Then
\begin{eqnarray}
\label{extndbarb}
f(n)\le\frac{sM}{2}h_f(n)\le\frac{M\log_2n}{2}h_f(n),
\end{eqnarray}
where $s$ is as in~{\rm (\ref{s})} and $M$ is as in~{\rm (\ref{Mm})}. Equality is attained if
and only if $n$ is a power of~$2$.
\end{theorem}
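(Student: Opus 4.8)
The plan is to read $f(n)$ off from its $gh$-decomposition and then estimate the completely additive factor one prime at a time.

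The first step is to apply Theorem~\ref{ladd} in the form~(\ref{f}). Since $f(p)=0$ for $p\in V_f$, this gives
\[
f(n)=\Big(\sum_{p\in U_f}\nu_p(n)\,\frac{f(p)}{h_f(p)}\Big)h_f(n).
\]
Before estimating I would establish that $h_f(n)>0$, which is where the hypotheses $f\ge\theta$ and~(\ref{hassump}) first enter. If $n$ has no prime factor in $U_f$ then $s=0$, hence $M=0$ by~(\ref{Mm}), and all three quantities in~(\ref{extndbarb}) are $0$, so the assertion is trivial for such $n$. Otherwise the sum above has at least one term, and every term is strictly positive: $f(p)>0$ for $p\in U_f$ (as $f\ge\theta$ and $f(p)\ne0$), and $h_f(p)\ge p>0$ by~(\ref{hassump}). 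Hence the sum is positive, and since $f(n)\ge0$ we conclude $h_f(n)\ge0$; being nonzero-valued, $h_f(n)>0$.

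Next comes the main estimate: for every prime $p\in U_f$ dividing $n$,
\[
\frac{f(p)}{h_f(p)}\le\frac{f(p)}{p}\le\frac{M}{p}\le\frac{M}{2},
\]
the three steps using, respectively, $h_f(p)\ge p$ together with $f(p)\ge0$, the definition~(\ref{Mm}) of $M$, and $p\ge2$ together with $M\ge0$. Summing against the weights $\nu_p(n)$ and using $\sum_{p\in U_f}\nu_p(n)=s$ (cf.~(\ref{s})) gives $\sum_{p\in U_f}\nu_p(n)f(p)/h_f(p)\le sM/2$; multiplying by $h_f(n)>0$ yields $f(n)\le\frac{sM}{2}h_f(n)$. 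For the second inequality, writing $n=q_1\cdots q_r$ as in~(\ref{nn}) with each $q_i\ge2$ gives $n\ge2^r$, hence $s\le r\le\log_2 n$; multiplying by $\tfrac{M}{2}h_f(n)\ge0$ gives $\frac{sM}{2}h_f(n)\le\frac{M\log_2 n}{2}h_f(n)$, which is~(\ref{extndbarb}).

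The part that needs the most care is the equality discussion. Running the three steps of the main estimate backwards, equality in $f(n)\le\frac{sM}{2}h_f(n)$ (for $n$ with $s\ge1$, where $M>0$ and $h_f(n)>0$) forces, for every prime $p\in U_f$ dividing $n$, that $p=2$, that $f(2)=M$, and that $h_f(2)=2$ (i.e.\ the bound~(\ref{hassump}) must be tight at $2$). Equality in the second inequality forces $s=\log_2 n$, hence $r=\log_2 n$ (so $q_1=\cdots=q_r=2$) and $s=r$ (so $n$ has no prime factor in $V_f$). Together these say $n=2^k$ with, when $k\ge1$, $2\in U_f$ and $h_f(2)=2$; conversely, for such $n$ one checks directly from $f(2^k)=kf(2)h_f(2)^{k-1}$ (formula~(\ref{fpa})) and $h_f(2^k)=2^k$ that $f(n)=\tfrac{k}{2}f(2)\,h_f(n)=\frac{sM}{2}h_f(n)=\frac{M\log_2 n}{2}h_f(n)$. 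For $f=D$, where $h_f=N$ and $V_f=\emptyset$, the condition $h_f(2)=2$ is automatic, so the equality case reduces to ``$n$ is a power of~$2$'', recovering the upper bounds of Theorem~\ref{dbndsthm}. The expected obstacle is thus purely bookkeeping — keeping track of exactly which of the estimates can be attained, in particular noticing that tightness pins $h_f(2)$ down to $2$ — rather than anything analytic, since the inequalities follow immediately once~(\ref{f}) and $h_f(n)>0$ are in hand.
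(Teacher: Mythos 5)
Your proof is correct and follows essentially the same route as the paper's: expand $f(n)$ via~(\ref{f}), bound each term $f(p)/h_f(p)$ by $M/p\le M/2$ using~(\ref{hassump}), and finish with $s\le r\le\log_2 n$; your preliminary verification that $h_f(n)>0$ is a detail the paper silently skips but which is needed to multiply through by $h_f(n)$. Your equality analysis is in fact more careful than the paper's ``obvious'': you correctly observe that equality additionally forces $h_f(2)=2$ (and, in the degenerate case $s=0$, holds for any $n$ with no prime factor in $U_f$), conditions that the theorem's stated criterion ``$n$ is a power of $2$'' omits and which only become automatic in special cases such as $f=D_S$, $h_f=N$.
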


\begin{proof}
By~(\ref{f}) and simple manipulation,
\begin{eqnarray*}
f(n)=h_f(n)\sum_{i=1}^r\frac{f(q_i)}{h_f(q_i)}=
h_f(n)\sum_{i=1}^s\frac{f(p_i)}{h_f(p_i)}\le
h_f(n)M\sum_{i=1}^s\frac{1}{p_i}\le
\\
h_f(n)M\sum_{i=1}^s\frac{1}{2}=
h_f(n)M\frac{s}{2}\le
h_f(n)M\frac{r}{2}\le
h_f(n)M\frac{\log_2n}{2}.
\end{eqnarray*}
The equality condition is obvious.
\end{proof}

\begin{theorem}
Let $f\ne\theta$ be nonnegative and L-additive satisfying~{\rm (\ref{hassump})}.
Then
\begin{eqnarray}
\label{extndwestr}
f(n)\le\Big(\frac{s-1}{2}h_f(n)+h_f(2^{s-1})\Big)M,
\end{eqnarray}
where $s$ is as in~{\rm (\ref{s})} and $M$ is as in~{\rm (\ref{Mm})}.
Equality is attained if and only if $n\in\mathbb{P}$ or
$p_1=\dots=p_{s-1}=2=h_f(2)$.
\end{theorem}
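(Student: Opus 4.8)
The plan is to imitate the proof of Theorem~\ref{westrthm}, but now working with the multiset of \emph{active} prime factors $p_1\le\dots\le p_s$ of~$n$ (those lying in $U_f$) and starting from the identity $f(n)=h_f(n)\sum_{i=1}^{s}f(p_i)/h_f(p_i)$, which follows from~(\ref{f}) after the vanishing terms $f(q_i)=0$ with $q_i\notin U_f$ are discarded, exactly as in the proof of the preceding theorem. First I would settle the small cases: if $s=0$ then $f(n)=0$ and~(\ref{extndwestr}) is trivial, while if $s=1$ --- in particular if $n\in\mathbb{P}$ --- then $f(n)=h_f(n)f(p_1)/h_f(p_1)$ and $M=f(p_1)$, and~(\ref{extndwestr}) is checked directly, equality forcing $n=p_1$ so that the factor $h_f(p_1)=h_f(n)$ cancels. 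Hence assume $s\ge 2$ from here on.

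The second step is an elementary reduction: since $f$ is nonnegative, $0\le f(p_i)\le M$ for every~$i$, and $h_f(p_i)\ge p_i$ by~(\ref{hassump}), we obtain
$$
f(n)=h_f(n)\sum_{i=1}^{s}\frac{f(p_i)}{h_f(p_i)}\le M\,h_f(n)\sum_{i=1}^{s}\frac{1}{p_i}.
$$
Then I would run Westrick's three-case split on the two largest active primes, mirroring Cases~1--3 in the proof of Theorem~\ref{westrthm}. In \emph{Case~1} one has $p_1=\dots=p_{s-1}=2$, so $\sum_{i=1}^{s}1/p_i=(s-1)/2+1/p_s$ and the bound above becomes $\frac{s-1}{2}M\,h_f(n)+M\,h_f(n)/p_s$; writing $n=2^{s-1}p_s m$ with $m$ a product of primes outside $U_f$ and using the complete multiplicativity of $h_f$, the leftover term is $M\,h_f(2^{s-1})\cdot h_f(p_s)h_f(m)/p_s$, which has to be bounded by $M\,h_f(2^{s-1})$. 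In \emph{Case~2}, $p_1=\dots=p_{s-2}=2$ and $p_{s-1}>2$ (vacuous if $s=2$); here I would use Westrick's estimate
$$
\frac{1}{p_{s-1}}+\frac{1}{p_s}=\frac12+\frac{4-(p_{s-1}-2)(p_s-2)}{2p_{s-1}p_s}<\frac12+\frac{2}{p_{s-1}p_s},
$$
valid since $(p_{s-1}-2)(p_s-2)\ge 1$, so that $\sum_{i=1}^{s}1/p_i<(s-1)/2+2/(p_{s-1}p_s)$, and then treat the term $2\,M\,h_f(n)/(p_{s-1}p_s)$ as in Case~1; this case gives a strict inequality. In \emph{Case~3}, $p_{s-2}>2$, so $p_{s-2},p_{s-1},p_s\ge 3$ and $s\ge 3$, whence $\sum_{i=1}^{s}1/p_i\le(s-3)/2+1=(s-1)/2$ and $f(n)\le\frac{s-1}{2}M\,h_f(n)<{\rm rhs}(\ref{extndwestr})$.

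Putting the three cases together proves~(\ref{extndwestr}). For the equality statement, equality forces every intermediate inequality to be an identity, which rules out Cases~2 and~3 and, inside Case~1, forces $f(p_i)=M$ for every active $p_i$, $h_f(p_i)=p_i$ wherever that bound was used, and $h_f(2)=2$; combined with the $s\le 1$ analysis this gives precisely ``$n\in\mathbb{P}$ or $p_1=\dots=p_{s-1}=2=h_f(2)$''. The step I expect to be the real obstacle is exactly the re-expression carried out in Cases~1 and~2: showing that the small-denominator contributions $h_f(n)/p_s$ and $2\,h_f(n)/(p_{s-1}p_s)$ are at most $h_f(2^{s-1})$, which is where the complete multiplicativity of $h_f$ and the one-sided bound $h_f(p)\ge p$ for $p\in U_f$ are genuinely needed and where the equality condition $h_f(2)=2$ gets pinned down.
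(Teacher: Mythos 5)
Your reduction to $f(n)\le M\,h_f(n)\sum_{i=1}^{s}1/p_i$ is valid, but it is too lossy to close Cases~1 and~2, and the step you yourself flag as ``the real obstacle'' is in fact false in general. In Case~1 you need $h_f(n)/p_s\le h_f(2^{s-1})$; writing $h_f(n)=h_f(2^{s-1})h_f(p_s)h_f(m)$, this says $h_f(p_s)h_f(m)\le p_s$, whereas hypothesis~(\ref{hassump}) gives the \emph{opposite} inequality $h_f(p_s)\ge p_s$, with nothing preventing it from being strict. Concretely, take $f(2)=f(3)=1$, $h_f(2)=2$, $h_f(3)=100$, $f(p)=0$ and $h_f(p)=p$ for $p\ge5$ (this is L-additive by Theorem~\ref{ladduniq}, nonnegative, and satisfies~(\ref{hassump})), and $n=6$: then $s=2$, $M=1$, the right-hand side of~(\ref{extndwestr}) equals $\tfrac12 h_f(6)+h_f(2)=102$ (and indeed $f(6)=102$), but your intermediate bound is $M\,h_f(6)(\tfrac12+\tfrac13)=\tfrac{500}{3}>102$. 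The same failure occurs in Case~2, e.g.\ for $n=18$, where the leftover term $2M\,h_f(n)/(p_{s-1}p_s)$ is $40000/9$ while $h_f(4)=4$. So the inequality you propose to ``check'' cannot be checked; the uniform replacement of $f(p_i)/h_f(p_i)$ by $M/p_i$ must not be applied to the largest one or two active primes.

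The paper avoids this by exploiting the exact cancellation $f(p_s)\,h_f(n)/h_f(p_s)=f(p_s)\,h_f(2^{s-1})$ supplied by complete multiplicativity: in Case~1 it writes $f(n)=(s-1)f(2)h_f(2^{s-2})h_f(p_s)+f(p_s)h_f(2^{s-1})$, bounds the second summand by $M\,h_f(2^{s-1})$ without ever estimating $h_f(p_s)$, and uses $h_f(2)\ge2$ only to absorb the first summand into $\tfrac{s-1}{2}M\,h_f(n)$; Case~2 is handled analogously, with the Westrick-type quadratic estimate applied to $h_f(p_{s-1}),h_f(p_s)$ rather than to $p_{s-1},p_s$. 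Your handling of $s\le1$ and your Case~3 are fine (Case~3 survives because the slack $\tfrac13\le\tfrac12$ per prime already yields $\tfrac{s-1}{2}M\,h_f(n)$, and your version is cleaner than the paper's telescoping). One further remark: the factor $h_f(m)$ you introduce for the inactive part of $n$ is genuinely uncontrolled by the hypotheses --- the paper silently assumes all prime factors of $n$ lie in $U_f$ when it writes $f(n)=f(2^{s-1}p_s)$ --- so your suspicion there was well placed, but that is a defect of the theorem's statement rather than something your argument could repair.
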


\begin{proof}
If $s=1$ (i.e., $n\in\mathbb{P}$), then (\ref{extndwestr}) clearly holds with equality.
So, assume that $s>1$.

{\em Case 1}. $p_1=\dots=p_{s-1}=2$. Then
\begin{eqnarray*}
f(n)=f(2^{s-1}p_s)=f(2^{s-1})h_f(p_s)+f(p_s)h_f(2^{s-1})=\qquad\qquad
\\
(s-1)f(2)h_f(2^{s-2})h_f(p_s)+f(p_s)h_f(2^{s-1})\le\qquad\qquad
\\
\big((s-1)(h_f(2^{s-2})h_f(p_s)+h_f(2^{s-1})\big)M\le\qquad\qquad
\\
\Big((s-1)h_f(2^{s-2})h_f(p_s)\frac{h_f(2)}{2}+h_f(2^{s-1})\Big)M=
\Big(\frac{s-1}{2}h_f(n)+h_f(2^{s-1})\Big)M.
\end{eqnarray*}

{\em Case 2}. $p_1=\dots=p_{s-2}=2$ (omit this if $s=2$) and $p_{s-1}>2$.
If $s\ge 3$, then
\begin{eqnarray*}
f(n)=f(2^{s-2}p_{s-1}p_s)=f(2^{s-2})h_f(p_{s-1}p_s)+f(p_{s-1}p_s)h_f(2^{s-2})=
\qquad\qquad
\\
(s-2)f(2)h_f(2^{s-3})h_f(p_{s-1}p_s)+f(p_{s-1}p_s)h_f(2^{s-2})=
\\
\frac{s-2}{2}f(2)h_f(2^{s-2})h_f(p_{s-1}p_s)+
\big(f(p_{s-1})h_f(p_s)+f(p_s)h_f(p_{s-1})\big)h_f(2^{s-2})\le
\\
\Big(\frac{s-2}{2}h_f(2^{s-2})h_f(p_{s-1}p_s)+(h_f(p_{s-1})+
h_f(p_s))h_f(2^{s-2})\Big)M=
\\
\Big(\frac{s-2}{2}h_f(n)+(h_f(p_{s-1})+h_f(p_s))h_f(2^{s-2})\Big)M=
\qquad\qquad\qquad\qquad\qquad\qquad
\\
\Big(\frac{s-1}{2}h_f(n)+(h_f(p_{s-1})+h_f(p_s))h_f(2^{s-2})-\frac{1}{2}h_f(n)\Big)M.
\end{eqnarray*}
The last expression is obviously an upper bound for~$f(n)$ also if $s=2$.
If
$$
(h_f(p_{s-1})+h_f(p_s))h_f(2^{s-2})-\frac{1}{2}h_f(n)\le h_f(2^{s-1}),
$$
i.e.,
$$
2(h_f(p_{s-1})+h_f(p_s))-h_f(p_{s-1})h_f(p_s)\le 2h_f(2),
$$
then (\ref{extndwestr}) follows. Since
\begin{eqnarray*}
h_f(p_{s-1})h_f(p_s)-2(h_f(p_{s-1}+h_f(p_s))+4=
(h_f(p_{s-1})-2)(h_f(p_s)-2)\ge
\\
(p_{s-1}-2)(p_s-2)>0,
\end{eqnarray*}
we actually have a stronger inequality
$$
2(h_f(p_{s-1})+h_f(p_s))-h_f(p_{s-1})h_f(p_s)<4.
$$

{\em Case 3}. $p_{s-2}>2$. Then $s\ge 3$ and
\begin{eqnarray*}
f(n)=f(p_1)h_f(p_2\cdots p_s)+f(p_2\cdots p_s)h_f(p_1)=
f(p_1)\frac{h_f(n)}{h_f(p_1)}+f(p_2\cdots p_s)h_f(p_1)
\\
\le\frac{Mh_f(n)}{2}+f(p_2\cdots p_s)h_f(p_1).
\end{eqnarray*}
Since
\begin{eqnarray*}
f(p_2\cdots p_s)h_f(p_1)=\big(f(p_2)h_f(p_3\cdots p_s)+f(p_3\cdots p_s)h_f(p_2)\big)h_f(p_1)=
\\
f(p_2)\frac{h_f(n)}{h_f(p_2)}+f(p_3\cdots p_s)h_f(p_1p_2)\le
\frac{Mh_f(n)}{2}+f(p_3\cdots p_s)h_f(p_1p_2),
\end{eqnarray*}
we also have
$$
f(n)\le 2\frac{Mh_f(n)}{2}+f(p_3\cdots p_s)h_f(p_1p_2).
$$
Similarly,
\begin{eqnarray}
\label{ineq}
f(n)\le\frac{s-3}{2}Mh_f(n)+f(p_{s-2}p_{s-1}p_s)h_f(p_1\cdots p_{s-3}).
\end{eqnarray}
Because
\begin{eqnarray*}
f(p_{s-2}p_{s-1}p_s)=f(p_{s-2})h_f(p_{s-1}p_s)+f(p_{s-1})h_f(p_{s-2}p_s)+
f(p_s)h_f(p_{s-2}p_{s-1})
\\
\le Mh_f(p_{s-2}p_{s-1}p_s)\Big(\frac{1}{p_{s-2}}+\frac{1}{p_{s-1}}+\frac{1}{p_s}\Big)\le
\qquad\qquad\qquad\qquad\qquad\qquad\qquad
\\
Mh_f(p_{s-2}p_{s-1}p_s)\Big(\frac{1}{3}+\frac{1}{3}+\frac{1}{3}\Big)=Mh_f(p_{s-2}p_{s-1}p_s),
\end{eqnarray*}
it follows from~(\ref{ineq}) that
$$
f(n)\le\frac{s-3}{2}Mh_f(n)+Mh_f(n)=\frac{s-1}{2}Mh_f(n).
$$
In other words, (\ref{extndwestr}) holds strictly.

The proof is complete. It also includes the equality conditions.
\end{proof}

If we do not know~$s$ (but know~$r$), we can substitute $s=r$ in~(\ref{extndbarb})
and~(\ref{extndwestr}). We complete this section by extending the lower bound~(\ref{dbnds}).

\begin{theorem}
Let $f$ be nonnegative and L-additive, and let $n$ be as in~{\rm (\ref{nn})} with
$$
h_f(q_1),\dots,h_f(q_r)>0.
$$
Then
$$
f(n)\ge rmh_f(n)^\frac{r-1}{r},
$$
where
$$
m=\min_{1\le i\le r}f(q_i).
$$
Equality is attained if and only if $n$ is a prime or a power of~$2$.
\end{theorem}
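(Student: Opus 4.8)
The plan is to imitate the proof of the classical lower bound $D(n)\ge rn^{(r-1)/r}$ from Theorem~\ref{dbndsthm}, with $h_f$ taking the role of the identity function $N$ and the constant $m$ taking the role of $D(p)=1$; the only substantial ingredient will be the arithmetic--geometric mean inequality.

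First I would use the decomposition~(\ref{f}) (exactly as in the proof of~(\ref{extndbarb})) to write
$$
f(n)=h_f(n)\sum_{i=1}^r\frac{f(q_i)}{h_f(q_i)}.
$$
Since $h_f$ is c-multiplicative and $h_f(q_1),\dots,h_f(q_r)>0$, we get $h_f(n)=h_f(q_1)\cdots h_f(q_r)>0$, so each summand is well defined and nonnegative; and $f(q_i)\ge m\ge 0$ for every $i$ because $f$ is nonnegative. Hence
$$
f(n)\ge m\,h_f(n)\sum_{i=1}^r\frac{1}{h_f(q_i)}.
$$

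Next I would apply AM--GM to the $r$ positive reals $1/h_f(q_1),\dots,1/h_f(q_r)$:
$$
\sum_{i=1}^r\frac{1}{h_f(q_i)}\ge r\Big(\prod_{i=1}^r\frac{1}{h_f(q_i)}\Big)^{1/r}=r\,h_f(n)^{-1/r},
$$
using $h_f(n)=\prod_{i=1}^r h_f(q_i)$ once more. Combining the two displays yields
$$
f(n)\ge m\,h_f(n)\cdot r\,h_f(n)^{-1/r}=rm\,h_f(n)^{(r-1)/r},
$$
which is the asserted bound.

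For the equality statement I would track when both of the inequalities above are simultaneously equalities. Equality in $f(q_i)\ge m$ for every $i$ forces $f(q_1)=\dots=f(q_r)=m$, and equality in AM--GM forces $h_f(q_1)=\dots=h_f(q_r)$ (this second condition being vacuous when $m=0$, since then both sides are $0$). For $n\in\mathbb{P}$ we have $r=1$ and both conditions hold trivially, while for $n$ a power of $2$ all the $q_i$ equal $2$ and again both hold, so equality is attained in these two cases. The step I expect to require the most care is the reverse implication --- showing that for any remaining $n$ at least one of the two estimates is strict (some prime factor has $f$-value exceeding $m$, or two of the repeated prime factors carry different $h_f$-values) --- so that the equality set is exactly the one claimed; this is the only point at which one must examine the factorization of $n$ rather than merely manipulate the formulas, and it is the place I would be most careful.
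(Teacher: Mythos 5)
Your argument is exactly the paper's: rewrite $f(n)=h_f(n)\sum_{i=1}^r f(q_i)/h_f(q_i)$ via~(\ref{f}), bound each $f(q_i)$ below by $m$, and apply the arithmetic--geometric mean inequality to $1/h_f(q_1),\dots,1/h_f(q_r)$ using $h_f(n)=h_f(q_1)\cdots h_f(q_r)$. The equality analysis you rightly flag as the delicate point is simply dismissed as ``obvious'' in the paper, so your more cautious treatment of it is, if anything, an improvement.
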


\begin{proof}
By~(\ref{f}) and the arithmetic-geometric
mean inequality,
\begin{eqnarray*}
f(n)=h_f(n)\sum_{i=1}^r\frac{f(q_i)}{h_f(q_i)}\ge
h_f(n)m\sum_{i=1}^r\frac{1}{h_f(q_i)}\ge
h_f(n)m\frac{r}{(h_f(q_1)\cdots h_f(q_r))^\frac{1}{r}}=
\\
h_f(n)m\frac{r}{h_f(q_1\cdots q_r)^\frac{1}{r}}=
h_f(n)m\frac{r}{h_f(n)^\frac{1}{r}}=
rh_f(n)^{1-\frac{1}{r}}m.
\end{eqnarray*}
The equality condition is obvious.
\end{proof}

\section{Concluding remarks}
\label{Concluding}
In order to extend the concepts of arithmetic derivative and arithmetic partial
derivative, we first defined the concept of arithmetic subderivative.
As a further extension, we defined the concept of L-additive function. For simplicity,
we stated (contrary to~\cite{HMT2}) that $h_f$ must be nonzero-valued. If we allow $h_f$ to be zero, it turns out that
we then just meet extra work without gaining in results.

Which properties of the arithmetic derivative can be extended to
arithmetic subderivatives and, further, to L-additive functions?
As we saw above, this question can be answered, at least, within certain bounds for the arithmetic
derivative.

\end{document}